\documentclass[10pt]{amsart}

%
%

\usepackage{amssymb}
\usepackage{amsmath}
\usepackage{amsthm}
\usepackage{amssymb}



\theoremstyle{plain}
\newtheorem{theorem}[subsection]{Theorem}

\newtheorem*{theorem*}{Theorem}

\newtheorem*{proposition*}{Proposition}

\newtheorem*{lemma*}{Lemma}

\theoremstyle{definition}

\theoremstyle{remark}
\newtheorem*{remark}{Remark}

\newtheorem*{acknowledgements}{Acknowledgements}

\newcommand{\Hom}{\operatorname{Hom}}

\newcommand{\GL}{\operatorname{GL}}

\newcommand{\PGL}{\operatorname{PGL}}
\newcommand{\PSL}{\operatorname{PSL}}

\newcommand{\PSO}{\operatorname{PSO}}

\newcommand{\meas}{\operatorname{\meas}}

\newcommand{\nc}{\newcommand}
\nc{\cal}{\mathcal} 
\nc{\la}{\langle} \nc{\ra}{\rangle}
 \nc{\CA}{\cal A}
 \nc{\CBB}{\cal B}
\nc{\CDD}{\cal D}
\nc{\CE}{\cal E}
\nc{\CF}{\cal F} \nc{\CG}{\cal
G} \nc{\CH}{\cal H} \nc{\CI}{\cal I} \nc{\CJ}{\cal J}
\nc{\CK}{\cal K} \nc{\CL}{\cal L} \nc{\CM}{\cal M} \nc{\CN}{\cal
N} \nc{\CO}{\cal O} \nc{\CP}{\cal P} \nc{\CQ}{\cal Q}
\nc{\CR}{\cal R} \nc{\CS}{\cal S} \nc{\CT}{\cal T} \nc{\CU}{\cal
U} \nc{\CV}{\cal V} \nc{\CW}{\cal W} \nc{\CZ}{\cal Z}


\nc{\fa}{\mathfrak a} \nc{\fg}{\mathfrak g} \nc{\fk}{\mathfrak k}
\nc{\fh}{\mathfrak h} \nc{\fm}{\mathfrak m} \nc{\fn}{\mathfrak n}
\nc{\fA}{\mathfrak A} \nc{\fC}{\mathfrak C} \nc{\fI}{\mathfrak I}
\nc{\fL}{\mathfrak L} \nc{\fS}{\mathfrak S}
\nc{\fz}{\mathfrak z}


\nc{\nen}{\newenvironment} \nc{\ol}{\overline}
\nc{\ul}{\underline} \nc{\lra}{\longrightarrow}
\nc{\lla}{\longleftarrow} \nc{\Lra}{\Longrightarrow}
\nc{\Lla}{\Longleftarrow} \nc{\Llra}{\Longleftrightarrow}
\nc{\hra}{\hookrightarrow} \nc{\iso}{\overset{\sim}{\lra}}

\makeatletter
\@addtoreset{equation}{section}
\makeatother
\numberwithin{equation}{section}
 \nc{\ba}{\mathbb A}
 \nc{\bq}{\mathbb Q}
 \nc{\br}{\mathbb R}
 \nc{\bz}{\mathbb Z}
 \nc{\bc}{\mathbb C}
 \nc{\bn}{\mathbb N}
 \nc{\ck}{\mathcal{K}}
 \nc{\G}{\Gamma}
 \nc{\sm}{\setminus}
 \nc{\sub}{\subset}
 \nc{\lm}{\lambda}
 \nc{\al}{\alpha}
 \nc{\bt}{\beta}
 \nc{\om}{\omega}
 \nc{\dl}{\delta}
 \nc{\g}{\gamma}
 \nc{\Dl}{\Delta}
 \nc{\Om}{\Omega}
 \nc{\s}{\sigma}
 \nc{\ro}{\rho}
 \nc{\te}{\theta}
 \nc{\SLR}{\operatorname{SL}_2(\br)}
 \nc{\GLR}{\operatorname{GL}_2(\br)}
 \nc{\PGLR}{\operatorname{PGL}_2(\br)}
 \nc{\PSLR}{\operatorname{PSL}_2(\br)}
 \nc{\PSLZ}{\operatorname{PSL}_2(\bz)}
 \nc{\SLC}{\operatorname{SL}_2(\bc)}
 \nc{\uH}{\mathbb H}
 \nc{\fD}{\mathcal{D}}
 \nc{\fE}{\mathcal{E}}
 \nc{\fO}{\mathcal{O}}
 \nc{\haf}{\frac{1}{2}}
 \nc{\qtr}{\frac{1}{4}}
 \nc{\shaf}{{\scriptstyle\frac{1}{2}}}
 \nc{\hlm}{{\scriptstyle\frac{\lambda}{2}}}

 \nc{\8}{\infty}
 \nc{\7}{{-\infty}}
 \nc{\inv}{^{-1}}
 \nc{\eps}{\varepsilon}
 \nc{\aG}{\mathbf{G}}
 \nc{\spn}{\operatorname{Span}}
 \nc{\Cm}{\operatorname{CM}}


\begin{document}
\setcounter{tocdepth}{1}
\title[Dirichlet Series]{Torus periods of automorphic functions and the meromorphic continuation of related Dirichlet Series}

\author{Andre Reznikov} \email{}

\address{Department of Mathematics\\
Bar-Ilan University \\
Ramat Gan, 52900 \\
Israel}

\begin{abstract} We consider modular functions (i.e., the Eisenstein series and Hecke-Maass forms) for the group $\PSL_2(\bz)$. We fix  a quadratic number field $E$. This gives rise to  twisted (by a Hecke character of the field $E$)  periods of a modular function along the torus corresponding to $E$.  We prove meromorphic continuation for a  Dirichlet series generated by these twisted periods.
\end{abstract}

\subjclass[2000]{11M41 (Primary); 11M32, 22E55, 11F25, 11F70, 30B40 (Secondary)}

\thanks{Partially supported by the Veblen Fund at IAS, by a BSF grant, by the ISF  Center of Excellency grant 1691/10, and by the Minerva  Center at ENI}
\maketitle


\section{Introduction}\label{secintro}

\subsection{} In this paper, we consider $L$-functions and Hecke characters of  quadratic fields. Although arguments presented here are valid for a general  quadratic number field $E$, in order to simplify the presentation, we will first deal with the simplest case of Gauss numbers, and describe the general case in Section \ref{CM}. Hence, let $E=\mathbb{Q}(i)$ be the Gauss field. For an integer $n\in \bz$, consider the  Hecke character of $E$ given by $\chi_n(a)=\big({a}/{|a|}\big)^{4n}$. The set of all such characters $\{\chi_n\}_{n\in\bz}$ could be described as the set $\cal{X}_{un}(E)$ of all (maximally) unramified Hecke characters of $E$. The corresponding Hecke $L$-function  is given by the series $L(s,\chi)=\sum\limits_{a\in I^*(\CO_E)}\chi(a)N(a)^{-s}$, for $Re(s)>1$ (where the summation is over all non-zero integer ideals of $E$, i.e., over $\bz[i]/\{\pm 1,\pm i\}$ for $E=\mathbb{Q}(i)$).

We consider a double Dirichlet series given by
\begin{equation*}
D_{E}(s,w)=L(s,\chi_0)+\sum_{n\in\bz\setminus 0}L(s,\chi_n)|n|^{-w}\ ,
\end{equation*} for $(s,w)\in\bc^2$, $Re(s)\gg 1,\ Re(w)\gg1$. In what follows,  one can omit the first term $L(s,\chi_0)$ from the sum, or take the sum over positive $n$ only.

\begin{theorem}\label{thm-eisen}
The series $D_{E}(s,w)$ defines the function which extends to a meromorphic function on $\bc^2$.
\end{theorem}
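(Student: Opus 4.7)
The plan is to realize each $L(s,\chi_n)$ as a twisted torus period at the CM point $z_0=i\in\uH$, and then to sum in $n$ by spectral analysis on the compact torus. Since $\bz[i]$ has class number one and its units $\{\pm1,\pm i\}$ lie in the kernel of every $\chi_n$, Hecke's classical computation gives the explicit identity
$$4\,L(s,\chi_n)\;=\;\sum_{(c,d)\in\bz^2\setminus 0}\frac{e^{4in\theta(c,d)}}{(c^2+d^2)^{s}},\qquad \theta(c,d):=\arg(d+ci),$$
which identifies $L(s,\chi_n)$ (up to the unit-group factor) with the $n$-th angular Fourier coefficient of the Epstein zeta function of $c^2+d^2$; equivalently, it is the value at $z=i$ of a suitably normalized weight-$(4n)$ non-holomorphic Eisenstein series on $\PSL_2(\bz)\backslash\uH$.

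Substituting this into the definition of $D_E(s,w)$ and interchanging summations (legal in the quadrant $\operatorname{Re}(s),\operatorname{Re}(w)\gg 1$) gives
$$D_E(s,w)\;=\;\zeta_E(s)+2\zeta(w)\zeta(2s)+\tfrac{1}{4}\, S(s,w),\qquad S(s,w)\;:=\;\sum_{\substack{(c,d)\in\bz^2\\ cd\neq 0}}\frac{K_w(\theta(c,d))}{(c^2+d^2)^s},$$
where $K_w(\theta):=\sum_{n\neq 0}|n|^{-w}e^{4in\theta}$ is a Lerch/Hurwitz-type kernel. The explicit term $2\zeta(w)\zeta(2s)$ absorbs the axial lattice points $(c,0)$ and $(0,d)$, where $\theta\in\tfrac{\pi}{2}\bz$ and $K_w$ degenerates to $2\zeta(w)$.

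The kernel admits the closed form $K_w(\theta)=\mathrm{Li}_w(e^{4i\theta})+\mathrm{Li}_w(e^{-4i\theta})$, hence extends to a meromorphic function of $w\in\bc$: entire whenever $\theta\notin\tfrac{\pi}{2}\bz$, and with at most a simple pole at $w=1$ otherwise. To continue $S(s,w)$, I would insert the Mellin integral representation of $K_w$, interchange once more with the sum over $(c,d)$, and recognize the resulting $t$-integrand as a weighted non-holomorphic Eisenstein series on $\PSL_2(\bz)\backslash\uH$ evaluated at $i$. Its Mellin transform against $t^{w-1}$ then yields a meromorphic continuation in $(s,w)\in\bc^2$ by the standard spectral theory (functional equation of the Eisenstein series, contour shifting past the poles of the gamma factors and of the constant term).

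The main obstacle is the rigorous justification of the successive interchanges of summation and integration: each is valid only in a subset of $\bc^2$, and one must verify that the partial continuations obtained in the two directions (in $s$ for $\operatorname{Re}(w)\gg 1$; in $w$ for $\operatorname{Re}(s)\gg 1$) agree on the overlap, and then glue them to cover all of $\bc^2$. The expected polar divisors are $s=1$ from $\zeta_E$, $w=1$ from the Hurwitz zeta, $s=\haf$ from $\zeta(2s)$, together with a controlled family of further translates coming from the functional equation of the Eisenstein series.
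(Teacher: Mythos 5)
Your route is not the paper's. The paper realizes $D_E(s,w)$ as the value at the identity of the Eisenstein series on a non-smooth test vector $u_w$ whose circle-model realization has the singularity $|\theta|^{w-1}$ at $\theta=0$, and continues strip by strip in $w$ using the fact that $u_w$ is an approximate eigenvector of Hecke operators (all Hecke representatives being chosen in the Borel subgroup fixing the singular point); the ratio $\beta_p(\tau,w-1)-\lm_p$ is what gets divided out at each step and produces the polar lines. You instead unfold $L(s,\chi_n)$ into the angular Fourier expansion of the Epstein zeta function and resum over $n$ against the Lerch/polylogarithm kernel $K_w$. This is essentially the alternative argument that the paper credits to the referee (Lerch zeta function plus the functional equation of $L(s,\chi_n)$); it can be pushed through for the Eisenstein series and even yields the exact poles, but it has no analogue for the cuspidal series $D_E(\phi,w)$, which is precisely why the paper adopts the Hecke-operator method.

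As written, though, your argument has a genuine gap at the decisive step. Termwise meromorphic continuation of $K_w(\theta(c,d))$ in $w$ does not continue $S(s,w)$: near $\theta=0$ one has $\mathrm{Li}_w(e^{4i\theta})=\G(1-w)(-4i\theta)^{w-1}+\zeta(w)+O(\theta)$, so for the lattice points $c=1$, $|d|\to\8$ (where $\theta(c,d)\sim 1/d$) the continued terms have size $|d|^{1-Re(w)-2Re(s)}$ and the sum over $(c,d)$ with $cd\neq 0$ diverges as soon as $Re(w)\leq 2-2Re(s)$. That divergence is genuine --- it sits exactly on the polar lines $w=2-2s-j$ of \eqref{divisor} --- so no interchange-of-limits argument can remove it, and the continuation in $w$ past that boundary is exactly what remains to be proved. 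Your proposed fix (Mellin transform of $K_w$, then ``recognize a weighted Eisenstein series'' and invoke spectral theory) is both vague and nearly circular: the inner lattice sum at each value of the Mellin variable $t$ is again a linear combination of the $L(s,\chi_n)$ you started from, and what is actually required is a uniform asymptotic expansion as $t\to 0^+$ of the conjugate-Poisson-kernel lattice sum $\sum_{cd\neq 0}P_t(\theta(c,d))(c^2+d^2)^{-s}$; extracting that expansion is the real content of the proof and is not carried out. The cleaner way to finish your approach is the referee's: continue in $w$ for fixed $Re(s)>1$ via the Lerch/Hurwitz theory, continue in $s$ for $Re(w)$ large via the functional equation $L(s,\chi_n)\leftrightarrow L(1-s,\chi_{-n})$, whose archimedean factor grows like $|n|^{2s-1}$ and hence shifts the half-plane of convergence in $w$ (this is where $w=2-2s-j$ comes from), and then glue the two continuations. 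Finally, the pole at $s=\haf$ you predict from the term $2\zeta(w)\zeta(2s)$ is an artifact of splitting off the axial lattice points and must cancel against the corresponding term in $S(s,w)$; it does not occur in the polar divisor \eqref{divisor}.
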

It turns out that it is more convenient to consider the  function
\begin{equation}\label{D-tilde-Eisen}\tilde D_E(s,w)=\frac{2^{w/2}}{\G\left(\frac{1-w}{2}\right)}\cdot D_E(s,w)\ , \end{equation}
and we prove the meromorphic continuation for this function.
\subsection{Torus periods} One quickly recognizes that the above theorem is related to periods of automorphic functions. In fact, our proof of the  meromorphic continuation is based on two well-known facts. First, we invoke classical results of E. Hecke \cite{He} (and also of H. Maass and  C. Siegel \cite{Si}) about torus periods of Eisenstein series. Namely, we  consider the automorphic representation $E_s$ corresponding to the (normalized) Eisenstein series $E(s,x)$ for $\PGL_2$ over $\bq$. Let  $T_E\subset\PGL_2$ be the torus corresponding to $E$. The (Fourier) expansion along the orbit $T_E(\bq)\setminus T_E(\ba_\bq)\subset \PGL_2(\bq)\setminus \PGL_2(\ba_\bq)$ is given in terms of Hecke characters of $E$, and naturally leads to Hecke $L$-functions $L(s,\chi)$ over $E$. This allows us to realize the series $D_{E}(s,w)$ as the spectral expansion of the value at (the class of) the identity element $\bar e\in T_E(\bq)\setminus T_E(\ba_\bq)$ for a special vector $v_w$ in the space of the Eisenstein series representation $E_s$. We note that the vector $v_w$ is not a smooth vector, and belongs to an appropriate Sobolev space completion of $E_s$. In particular, we invoke the meromorphic continuation of smooth Eisenstein series as opposed to $K$-finite Eisenstein series (for general a treatment, see \cite{BK}, \cite{L}; for a congruence subgroup of $\PGL_2(\bz)$, an elementary treatment based on Fourier expansion of $E(s)$ is also available).

To prove the meromorphic continuation of the value at the identity for the special vector $v_w$,  we use Hecke operators and the classical technique going back to at least M. Riesz \cite{R} (and might be attributed to Euler) of the analytic continuation strip by strip (which the author learned from the seminal paper \cite{B}). The main observation that allows  us to apply this technique is the fact  that  modulo higher Sobolev spaces, the vector $v_w$  is an approximate eigenvector of (appropriately understood) Hecke operators (see Lemma \ref{aprox-eigenval}).

\subsection{Cusp forms} One can apply the same argument to a Hecke-Maass cusp form $\phi$ instead of the Eisenstein series $E(s)$.   The resulting series is a usual Dirichlet series in {\it one variable} built from the coefficients $a_n$ which are (twisted) torus periods of the cusp form $\phi$. We now define these coefficients.

Let $\phi$ be a Hecke-Maass form for the group $\G=\PGL_2(\bz)$ (one can easily extend our arguments to a congruence subgroup). In particular, $\phi$ is an eigenfunction of the Laplace-Beltrami operator $\Dl$ on the Riemann surface $\PSL_2(\bz)\setminus\CH$ with the eigenvalue which we denote by  $\Lambda(\phi)=(1-\tau^2)/4$ for $\tau=\tau(\phi)\in i\br\cup (0,1)$ (of course, for $\PGL_2(\bz)$, the parameter $\tau$ is pure imaginary, and this is expected to hold for congruence subgroups). We normalize $\phi$ by its $L^2$-norm. We denote by $(V_\tau, \pi_\tau)$ the isomorphism class of the (smooth) automorphic representation of $G=\PGLR$ generated by $\phi$. The structure of such a representation  is well known, and in particular $V_\tau$ has an orthonormal basis of $K$-types $\{e_n\}_{n\in 2\bz}$ which we fix  (here $K=\PSO(2,\br)\simeq S^1$ is a maximal connected compact subgroup of $G$). We consider the Taylor-like expansion of $\phi$ at $z=i$ (generally one considers  a $\Cm$-point $\fz\in \CH$; in fact such an expansion exists at any point of $\CH$). Denote by  $F_\tau(n,g)=\langle \pi_\tau(g) e_0, e_n\rangle_{\pi_\tau}$, $n\in 2\bz$,  matrix coefficients in the representation $\pi_\tau$. Functions $F_\tau(n,g)$ are  right $K$-invariant and hence could be viewed as functions of $z\in\CH\simeq \PGL^+_2(\br)/K$. Functions  $F_\tau(n,z)$  are eigenfunctions of $\Dl$ on $\CH$ with the same eigenvalue as $\phi$, and spherically equivariant $F_\tau\left(n,\left(
                                          \begin{smallmatrix}
\cos\te   &-\sin\te\\
                             \sin\te               &\ \  \cos\te \\
\end{smallmatrix}
                                        \right)z\right)= e^{in\te}F_\tau(n,z)$.  We have the following well-known expansion (first considered by  H. Petersson \cite{P} for holomorphic forms and also by A. Good \cite{Go} in general):
\begin{equation}\label{spherical-exp}
\phi(z)=\sum_{n\in 2\bz}a_n F_\tau(n,z)\ ,
\end{equation} where $a_n=a_n(\phi)\in\bc$. Of course, coefficients $a_n$ depend on the normalization of functions $F_\tau(n,z)$, which we fix in Section \ref{F-normalize} (by choosing a basis $\{e_n\}$ of $\pi_\tau$). This normalization will essentially coincide with one of the classical normalizations of the special function $F_\tau(n,z)$. In particular, this will not depend on $\phi$ but only on the parameter $\tau$.   We note that the analogous expansion is valid  for the Eisenstein series $E(s,z)$ as well, and gives $a_n(E(s))= L(s,\chi_{-n})$.

We consider the Dirichlet series
\begin{equation}\label{D-series-cusp}
D_{E}(\phi,w)=a_0+\sum\limits_{n\in 2\bz\setminus 0} a_n |n|^{-w}\end{equation}
defined for $|w|\gg 1$. (The coefficient $a_0$ is included for the aesthetic reasons only.) As with the Eisenstein series we consider the function
\begin{equation}\label{D-tilde-cusp}\tilde D_E(\phi,w)=\frac{2^{w/2}}{\G\left(\frac{1-w}{2}\right)}\cdot D_E(\phi,w)\ . \end{equation}
\begin{theorem}\label{thm-cusp}

The Dirichlet series $\tilde D_{E}(\phi,w)$  extends to a holomorphic function on $\bc$.
\end{theorem}

\subsection{Hyperbolic periods} A similar treatment is available for real quadratic fields. These correspond to (compact) closed geodesics on the Riemann surface $Y=\PGL_2(\bz)\setminus\PGL_2(\br)$. In fact, from the adelic point of view, there is no difference in the treatment of $\Cm$-points and of closed geodesics.

Let $l\subset Y$ be a closed geodesic. Such a geodesic corresponds to a closed orbit of the diagonal subgroup $A=\left\{\left(
                                          \begin{smallmatrix}
a  & \\
                                   &\ \ b \\
\end{smallmatrix}
                                        \right)\right\}\subset G$ acting on the right on $X=\G\setminus G$. We denote this orbit by the same letter $l\subset X$. Consider the corresponding subgroup $A_l=Stab_A(l)$. We will assume that it is cyclic and we will choose a generator $a_l=diag(a_l,\pm a_l\inv)$. This choice gives the corresponding hyperbolic element $\g_l\in \G$ which is conjugate to $a_l=g_l\g_l g_l\inv$. Eigenvalues of $a_l$ (and of $\g_l$) generate the group of  units in a quadratic field $E$. In fact, there is a finite number of closed geodesics corresponding to the same field and this is reflected in the class number of the field.

For a closed geodesic $l$ as above, we obtain an expansion of automorphic functions similar to the expansion at a $\Cm$-point we discussed above.   Such an expansion is valid for cusp forms and for  Eisenstein series (e.g., see \cite{Gol}). To describe these in classical terms, one introduces special functions similar to $F_\tau(n,z)$ above. This time we use characters $\chi:A_l\setminus A\to\bc^\times$ of the compact group $A_l\setminus A\simeq S^1$. For any such a character, we consider the function $G_\tau(\chi,z)$ on $\CH$ which is an eigenfunction of $\Dl$, is right $K$-invariant, and satisfies $G_\tau(\chi, g_l\inv a g_l\circ z)= \chi(a)G_\tau(\chi,z)$. In fact, we can view the function $G_\tau(\chi,z)$ as defined on the hyperbolic cylinder $\CH_l=\G_l\setminus \CH$ for which $l$ is the ``neck" (i.e., the shortest geodesic). We have to normalize functions $G_\tau(\chi,z)$ which we do in Section \ref{F-normalize} by presenting an explicit integral representation for these functions (the function $G_\tau(\chi,z)$ classically is given in terms of the Gauss hypergeometric function).
We obtain the expansion analogous to \eqref{spherical-exp}
\begin{equation}\label{hyp-exp}
\phi(z)=\sum_{j\in \bz}b_j G_\tau(j,z)\ ,
\end{equation} where $b_j=b_{\chi_j}(\phi)\in\bc$ are the coefficients of this hyperbolic expansion, and functions $G_\tau(j,z)=G_\tau(\chi_j,z)$ are indexed by characters $\chi_j$ in the group  $\widehat{A_l\setminus A}\simeq\bz$. With this we define  the Dirichlet series
\begin{equation}\label{D-series-cusp-hyp}
D_{E}(\phi,w)=b_0+\sum\limits_{j\in \bz\setminus 0} b_j |j|^{-w}\end{equation}
defined for $|w|\gg 1$. (The coefficient $b_0$ is again included out of aesthetic reasons only.) As before, we consider the function
\begin{equation}\label{D-tilde-cusp-hyp}\tilde D_E(\phi,w)=\frac{2^{w/2}}{\G\left(\frac{1-w}{2}\right)}\cdot D_E(\phi,w)\ . \end{equation}

\begin{theorem}\label{thm-cusp}

The Dirichlet series $\tilde D_{E}(\phi,w)$  extends to a holomorphic function on $\bc$.
\end{theorem}

We hope that by denoting the series by the same symbol as in $\Cm$-case, we will not cause too much of a confusion (and in fact from the adelic point of view the treatment of these two cases is identical).

\subsection{Remarks} 1. Under the normalization we chose in \eqref{spherical-exp} and in \eqref{hyp-exp}, coefficients $a_n$ satisfy a mean-vale bound $c_\phi\leq T\inv\sum_{|n|\leq T}|a_n|^2\leq C_\phi$ for appropriate constants $C_\phi,\ c_\phi>0$ as $T\to\8$. Hence there are no obvious reasons why the series $D_E(\phi,w)$ possess the meromorphic continuation. \\
2. Coefficients  $a_n$ and $b_n$ are related to $L$-functions in a  more subtle way than for the Eisenstein series. Namely,  the  theorem of J-L. Waldspurger (see \cite{W}, \cite{JC}, \cite{KW}) provides the relation $|a_n|^2=L(1/2, BC_E(\phi)\otimes \chi_n)/L(1,Ad(\phi))$, where $BC_E(\phi)$ is the base change of the cusp form $\phi$. In spite of this relation,  our method naturally  treats coefficients $a_n$ and not  quantities  $|a_n|^2$.\\
3. The proof that we give shows that  the polar divisor of $D_E(s,w)$ is contained in the union  of the line   $s=1$ with the union of two families of   lines
\begin{eqnarray}\label{divisor}\ w=2-2s-j\ , {\rm or}\ w=1-j, \ j=0,1,2,\dots\  .
\end{eqnarray}
A somewhat more symmetric form of $\tilde D_E(s,w)$ and of its polar set is discussed in Remark \ref{torus-char}.
For a Hecke-Maass cusp form $\phi$, we show that the series $\tilde D_E(\phi,w)$ is holomorphic.
One can also obtain polynomial bounds in $s$ and $w$ for the resulting function  $\tilde D_E(s,w)$.

After announcing the results of the paper, the author was informed by the referee that one can deduce the meromorphic continuation, and the exact locations of poles for the series $D_E(s,w)$ from properties of the Lerch zeta function  (see \cite{Lerch}), and  from the functional equation for the Hecke $L$-functions $L(s,\chi)$.  While this approach is more elementary, it could not cover the case of cusp forms. On the other hand, for the Eisenstein series, this approach leads to the exact location of poles, while our method only gives the potential polar divisor of  $D_E(s,w)$.

It is also apparently possible to deduce meromorphic continuation of $D_E(\phi,w)$ for cusp forms which are not necessary Hecke forms,   using methods of \cite{BLZ}, as was demonstrated by R. Bruggeman (personal communication). It is however less clear how to extend this approach to cover real quadratic fields or the Eisenstein series.
\\
4. One can consider a slightly more general series by prescribing the ramification of Hecke characters. Let $S$ be a finite set of primes, including all  primes ramified in $E$, and $\cal{X}_{S}$  be the set of Hecke characters unramified outside $S$. The natural extension of our method then gives the meromorphic continuation for the series  $\sum_{\chi\in \cal{X}_{S}} L_S(s,\chi)R_S(\chi^S)|\chi_\8|^{-w}$, where $L_S(s,\chi)$ is the partial $L$-function,  $R_S(\chi^S)$ is a rational in $q_i$ function for $q_i$ that are norms of primes in $S$, and $\chi^S$ is the ramified part of $\chi$.\\
5. An important issue in the theory of (double) Dirichlet series is the presence of functional equation(s). The theory of Eisenstein series provides the functional equation in $s$ relating $D_E(s,w)$ and $D_E(1-s,w)$. It is not clear if there is a functional equation involving $w$.\\
6. Finally, we note that from the point of view of the method we present, there is nothing special about series  $D_E(s,w)$ and  $D_E(\phi,w)$. Namely, one can change the weight $|n|^{-w}$ to many other similar weight functions, and still obtain  the meromorphic continuation by the same method. As a result, it is possible that one might have to modify these series in order to study their possible arithmetical properties (e.g., special values).


\section{Torus periods}\label{periods}

We refer to \cite{Bu} for standard facts about automorphic functions and automorphic representations (of real and adele groups).

\subsection{Torus periods of Eisenstein series}\label{tor-periods-set-up} We recall  the classical result of E. Hecke.
We present it in (a more transparent to us) adelic language. Let $G=\PGL_2$.  By specifying an isomorphism $E^\times\subset Aut_\bq(E)\simeq Aut_\bq(\bq^2)$, we obtain   the corresponding tori $T_E\subset \PGL_2$ defined over $\bq$. For $s\in\bc$, $s\not=1$, let $\cal{E}_s\simeq\hat\otimes_{p\leq\8}E_{s,p}$ be the automorphic representation of $G_{\ba_\bq}$ corresponding to the classical {\it normalized} Eisenstein series (given by
$E_s(z)=\sum_{c,d}y^s/|cz+d|^{2s}$ for $Re(s)>1$, where the summation is over $(c,d)\in\bz^2\setminus (0,0)$). In this normalization, the unitary Eisenstein series corresponds to $Re(s)=1/2$. We consider vectors in $\cal{E}$ which are pure tensors of the form $v_\8\otimes v_f\in \cal{E}_s$ where $v_f=\otimes_{p<\8} v_p$ is the standard $K_f$-fixed vector for the maximal compact subgroup $K_f$ of $G$ over finite adeles,  and
$v_\8$ is an arbitrary vector in the infinite component $E_{s,\8}$ of $\cal{E}_s$.  Recall that the theory of Eisenstein series provides the automorphic realization $E_s(v,g)$ (i.e., a function on $X_\ba$) for a vector $v\in\cal{D}_s$ in the principal series representation $\cal{D}_s$ of $G_{\ba_\bq}$, where $\cal{D}_s$ is the space  of homogeneous functions with respect to the $\ba_\bq^\times$ action  on the space $Z_{\ba_\bq}=N_{\ba_\bq}\setminus G_{\ba_\bq}=\prod'_{p\leq\8} N_{\bq_p}\setminus G_{\bq_p}$. The space $\cal{D}_s$ has the natural structure of the (restricted) tensor product $\cal{D}_s\simeq\hat\otimes_{p\leq\8}D_{s,p}$ coming from the above product structure of $Z_{\ba_\bq}$ (unlike the space $\cal{E}_s$ where the tensor product is not described in terms of the underlying space $X_\ba=G_\bq\sm G_\ba$). Here local components are the spaces $D_{s,p}$ of homogeneous functions on $N_p\setminus G_p$. Hence when talking about models of the local representations $E_{s,p}$, we can use the spaces $D_{s,p}$ (in fact, we only use $p=\8$ since we will not discuss  ramified Hecke characters).

\begin{proposition*}\label{Hecke-Maass-prop} (Hecke) Let $v=v_\8\otimes v_f\in \cal{E}_s$ be a vector with everywhere unramified standard finite components. We have then
\begin{equation}\label{Hecke-Maass}
\int_{T_E(\bq)\setminus T_E(\ba_\bq)}E_s(v,t)\chi(t)dt=I_\8(s,\chi_\8,v_\8)\cdot L(s,\chi)\  ,
\end{equation} for any character $\chi=\chi_\8\otimes\chi_f: Z_{G}(\ba_\bq)T_E(\bq)\setminus T_E(\ba_\bq)\to \bc^\times$ (i.e., for a Hecke character of $E$ trivial on $\ba_\bq^\times\subset\ba_E^\times$). Here the functional $I_\8(s,\chi_\8,\cdot):E_{s,\8}\to\bc$ is given by
\begin{equation}\label{I8-funct}
I_\8(s,\chi_\8,v_\8)=\int_{ T_E(\br)}v_\8(t)\chi_\8(t)dt\  ,
\end{equation} for a vector $v\in D_{s,\8}$.
\end{proposition*}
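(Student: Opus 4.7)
The strategy is the classical unfolding of Hecke, translated into the adelic framework set up above. For $Re(s)>1$, realize the Eisenstein series by the absolutely convergent sum
\[
E_s(v,g) \;=\; \sum_{\g\in B(\bq)\sm G(\bq)} v(\g g),
\]
where $v\in \mathcal{D}_s$ is extended from $Z_{\ba_\bq}=N(\ba_\bq)\sm G(\ba_\bq)$ to $G(\ba_\bq)$ by its standard $B$-homogeneity. Substituting this into \eqref{Hecke-Maass} and exchanging sum with integration (legitimate because $[T_E]$ has finite volume and the sum converges absolutely) reduces the period to
\[
\int_{[T_E]} \chi(t)\sum_{\g\in B(\bq)\sm G(\bq)} v(\g t)\, dt.
\]

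The geometric heart of the argument is the unfolding. Since $E$ is imaginary quadratic, $T_E$ is anisotropic over $\bq$; concretely, the two fixed points of $T_E$ on $\mathbb{P}^1\simeq B\sm G$ are defined over $E$ and conjugate under $\Gal(E/\bq)$, so $T_E(\bq)$ acts without fixed points on $\mathbb{P}^1(\bq)\simeq B(\bq)\sm G(\bq)$. Hence $T_E(\bq)$ acts \emph{freely} on $B(\bq)\sm G(\bq)$ with a finite set of orbits whose cardinality is governed by the class number of $E$ (equal to $1$ for $\bq(i)$). A standard computation then identifies the sum over these orbits, together with the decomposition of $T_E(\ba_\bq)/T_E(\bq)$ into its connected components, with the single clean global integral
\[
\int_{T_E(\ba_\bq)} v(\eta t)\,\chi(t)\, dt
\]
for a representative $\eta\in G(\bq)$ of the open $B(\bq)$--$T_E(\bq)$ double coset. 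The triviality of $\chi$ on the center $\ba_\bq^\times\subset \ba_E^\times$ is what makes the contributions from distinct components reassemble consistently.

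Finally, since $v=v_\8\otimes v_f$ is a pure tensor and Haar measure on $T_E(\ba_\bq)$ factors as a restricted product of local Haar measures, the global integral splits as
\[
I_\8(s,\chi_\8,v_\8)\cdot \prod_{p<\8}\int_{T_E(\bq_p)} v_p(\eta_p t_p)\,\chi_p(t_p)\, dt_p,
\]
after choosing $\eta$ with $\eta_\8=e$. At each finite prime $p$, with $v_p$ the standard spherical vector in $D_{s,p}$ and $\chi_p$ unramified, the local integral is the classical Tate-type computation: a $p$-adic Iwasawa decomposition reduces it to a geometric series in $q_p^{-s}$ whose sum is exactly the local Hecke $L$-factor $L_p(s,\chi)$ of $E$. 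The Euler product of these factors equals $L(s,\chi)$, yielding \eqref{Hecke-Maass}.

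The main obstacle is the unfolding step: carefully matching the finite set of $T_E(\bq)$-orbits on $\mathbb{P}^1(\bq)$ against the connected components of the adelic torus quotient, and checking that $\chi$ distributes correctly across these components so that all contributions collapse into one $T_E(\ba_\bq)$-integral. Once this bookkeeping is in place, the local computation at each finite place is standard, and the archimedean factor is recognized as $I_\8(s,\chi_\8,v_\8)$ by its very definition \eqref{I8-funct}.
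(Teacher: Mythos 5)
Your proposal is correct in outline and is exactly the argument the paper has in mind: the paper's own ``proof'' is a one-line citation of the standard unfolding (Garrett's notes and Goldfeld's book), and you have simply written that unfolding out. Two details are worth correcting, though neither breaks the argument. First, the orbit count: identifying $\bq^2\simeq E$ so that $T_E(\bq)=E^\times/\bq^\times$ acts on $\mathbb{P}^1(\bq)=(E\setminus 0)/\bq^\times$ by multiplication, the action is free \emph{and transitive} --- there is exactly one $B(\bq)$--$T_E(\bq)$ double coset (the open one), independent of the class number; the class number enters only in the component structure of $T_E(\bq)\backslash T_E(\ba_\bq)$, not in the rational orbit set, so the collapse to a single integral $\int_{T_E(\ba_\bq)}v(\eta t)\chi(t)\,dt$ is cleaner than your ``reassembling over several orbits'' suggests. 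Second, for the spherical section normalized by $v_p(e)=1$ the unramified local integral yields $L_p(s,\chi_p)/\zeta_p(2s)$ rather than $L_p(s,\chi_p)$ on the nose; the global factor $\zeta(2s)$ is exactly what is absorbed by the paper's choice of the \emph{normalized} Eisenstein series $\sum_{(c,d)\neq(0,0)}y^s/|cz+d|^{2s}$ (the full lattice sum, not the coprime one), so you should either work with that normalization of the section or track this factor explicitly.
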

\begin{proof} This follows from, the standard by now, unfolding (see \cite{G}, \cite{Gol}). In fact, for a general $E$ and a Hecke character $\chi$ of $E$, one has to take into account the ramification of $\chi$. In that case, the right hand side of \eqref{Hecke-Maass} takes the form $I_\8(s,\chi_\8,v_\8)\prod_{\nu\in S}I_\nu(s,\chi_\nu,v_\nu)\cdot L_S(s,\chi)\ $ where $S$ is the set of ramified for $\chi$ primes, and $L_S(s,\chi)$ is the partial $L$-function.
\end{proof}

Note that for $E=\bq(i)$, the group $T_E(\br)$ could be naturally identified with the subgroup $ K_\8= \operatorname{PSO}(2,\br)\subset \PGL_2(\br)$ (i.e.,  $K_\8= \operatorname{PSO}(2,\br)$ is the standard maximal connected compact subgroup of $G_\8=\PGL_2(\br)$). Hence for a Hecke character $\chi_n$, the resulting functional $I_\8(s,\chi_{n,\8},\cdot)$ could be identified with the projection to the particular norm one $n$'th $K_\8$-type in the representation $E_{s,p}$ . Moreover, in the realization of $E_{s,\8}$ as $D_{s,\8}$, this functional is given by the integration against the character {\it itself} on the image of the compact subgroup $K_\8\subset Z_\8=N_\8\setminus G_\8$ coming from the archimedean part $(T_E)_\8\subset G_\8$ of the torus $T_E$. For other $\Cm$-fields, we obtain a compact subgroup conjugated to $\operatorname{PSO}(2,\br)$, and hence have to consider types with respect to the corresponding  subgroup.

 By abuse of notations, we denote by $e\in G_{\bq}\setminus G_{\ba_\bq}$  the image of the identity. From the Plancherel formula for ${T_E(\bq)\setminus T_E(\ba_\bq)}$ (i.e., the Fourier expansion w.r.t. characters of  $T_E(\ba_\bq)$ trivial on $T_E(\bq)$), and Proposition \ref{Hecke-Maass-prop} above, we see that the following expansion holds for a vector $v=v_\8\otimes v_f\in \cal{E}_s$:
\begin{equation}\label{E-L-expansion}
E_s(v,e)=\sum_{\chi_n\in \cal{X}_{un}} L(s,\chi_n)\cdot I_\8(s,\chi_{n,\8},v_\8)\  .
\end{equation}

\subsection{Periods of cusp forms} Periods of cusp forms could be defined in the same way as for the Eisenstein series. However there is an important difference concerning their normalization.

Let $\pi\simeq\hat\otimes_{p\leq\8}\pi_p$ be an automorphic  cuspidal representation of $G(\ba)$ in the space of smooth vectors $V_\pi\simeq\hat\otimes_{p\leq\8}V_{\pi_p}$, together with the isometric realization $\nu_\pi: V_\pi\to C^\8(X_\ba)$. Let $T_E(\bq)\setminus T_E(\ba_\bq)\subset \PGL_2(\bq)\setminus \PGL_2(\ba_\bq)$ be the orbit of $T_E(\ba)$. For a Hecke character $\chi$ of $E$, we consider the corresponding $\chi$-equivariant functional $d_\chi^{aut}\in \Hom_{T_E(\ba)}(\pi,\chi)$ given by the integral (as for the Eisenstein series)
\begin{equation}\label{aut-funct-cusp}
d_\chi^{aut}(v)=\int_{T_E(\bq)\setminus T_E(\ba_\bq)}\phi_v(t)\bar\chi(t)dt\  ,
\end{equation}
where $\phi_v=\nu_\pi(v)\in C^\8(X)$ is the automorphic function corresponding to the smooth vector $v\in V_\pi$ under the isometry $\nu_\pi$ (i.e., the automorphic realization of the vector $v$). It is well-know that the space of local equivariant functionals is at most one dimensional $\dim\Hom_{T_p}(\pi_p,\chi_p)\leq 1$, and hence we have a decomposition $d_\chi^{aut}=\otimes_p d_{\chi_p}$ for some local functionals $d_{\chi_p}\in \Hom_{T_p}(\pi_p,\chi_p)$. However, unlike in the case of Eisenstein series, the lack of unfolding for the integral
\eqref{aut-funct-cusp} does not allow us to  choose easily  a specific element in the space $\Hom_{T_p}(\pi_p,\chi_p)$. In fact, one  can normalize $d_{\chi_p}$ up to a constant with absolute value one (this is connected to the Waldspurger theorem alluded before, and discussed in great generality in \cite{IchIke}), but for our purposes it is not enough since we are interested  in the period itself and not in its absolute value. Hence, we will choose the trivial normalization of local functionals in the following way.

We assume that the representation $\pi$ is unramified everywhere, i.e., that the Hecke-Maass form $\phi$ is invariant under the full group $\PSLZ$ (in fact, we can easily deal with any congruence subgroup). The notion of restricted tensor product assumes that we have chosen a $K_p$-invariant vector $e_p\in V_{\pi_p}$ of norm one for every finite $p$ (in general for almost all $p$). We have $\dim\Hom_{T_p}(\pi_p,\chi_p)=1$ since we assumed that $\pi_p$ is unramified for all $p$. It is known (see \cite{GrPr}) that a non-zero invariant functional does not vanish  on the vector $e_p$. We denote by $d_{\chi_p}^{mod}\in \Hom_{T_p}(\pi_p,\chi_p)$ the functional satisfying $d_{\chi_p}^{mod}(e_p)=1$. We consider the corresponding functional
$d_{\chi_f}^{mod}=\otimes_{p<\8}d_{\chi_p}^{mod}$ for finite adeles which clearly satisfies $d_{\chi_f}^{mod}(e_f)=1$ for  $e_f=\otimes_{p<\8}e_p$.  Hence for any choice of a non-zero functional $ d_{\chi_\8}^{mod}\in \Hom_{T_\8}(\pi_\8,\chi_\8)$, we obtain the coefficient of proportionality $a_\chi=a_\chi(\nu_\pi, d_{\chi_\8})\in \bc$ such that

\begin{equation}\label{a-chi-coeff}
d_\chi^{aut}=a_\chi\cdot d_{\chi_\8}^{mod}\otimes d_{f}^{mod} \ .
\end{equation} In fact, since we will only consider vectors of the form $v=v_\8\otimes e_f$, we can write
$d_\chi^{aut}(v_\8)=a_\chi\cdot d_{\chi_\8}^{mod}(v_\8)$.  We now specify the functional $d_{\chi_\8}^{mod}$. As we mentioned, for $E=\bq(i)$, we can naturally identify $T_\8=T_E(\br)$ with the subgroup $K_\8$. Characters of $T_\8$ related  to equivariant functionals on irreducible representations of $G_\8$ are parameterized by even integers $n\in 2\bz$, and naturally correspond to projectors onto (one-dimensional) $K_\8$-types. Let $\{e_n\}_{n\in 2\bz}$ be an orthonormal basis of $V_{\pi_\8}$ consisting of $K_\8$-types. We denote by $d_n^{mod}\in \Hom_{T_\8}(\pi_\8,\chi_n)$ the functional given by $d_n^{mod}(v_\8)=\langle v, e_n\rangle_{\pi_\8}$, where the character $\chi_n$ is given by
$\chi_n\left(\left(
                                          \begin{smallmatrix}
\cos\te   &-\sin\te\\
                             \sin\te               &\ \  \cos\te \\
\end{smallmatrix}
                                        \right)\right)= e^{in\te}$.

Hence we have the decomposition analogous to \eqref{E-L-expansion}
\begin{equation}\label{Cusp-expansion}
\phi_{v_\8}(e):=\nu_\pi(v_\8)(e)=\sum_{\chi_n\in \cal{X}_{un}}a_n\cdot d_n^{mod}(v_\8)\  .
\end{equation} It is easy to see that this expansion coincides with the expansion \eqref{spherical-exp} given in classical terms on $\CH$.

\subsection{Test vectors} In order to realize  the series $D_E(s,w)$ and $D_E(\phi, w)$ as the right hand side of formulas  \eqref{E-L-expansion} and \eqref{Cusp-expansion}, we need to construct a vector $v_w$ in the principal series representation satisfying certain properties. We construct such a vector and make computations in a well-know model of induced representations of $\PGLR$.
\subsubsection{The plane realization} The basic affine space $Z_\8$ is  isomorphic to the punctured plane $\br^2\setminus 0$. This leads to the standard realization of the principal series representation in homogeneous functions on the plane.
For a complex parameter $\tau\in\bc$ and $\eps\in\{0,1\}$, the (smooth part of the) representation $\pi_{\tau,\eps}$ of principal series  has the realization in the space of homogeneous functions on $\br^2\setminus 0$ of the homogeneous degree $\tau-1$.  The twisted action $\pi_{\tau,\eps}(g)f(t)=f(g\inv t)|\det g|^{\frac{\tau-1}{2}}\det(g)^\eps$, $t\in  \br^2\setminus 0$,  defines a representation of   $\GL_2(\br)$ with the trivial center character, and hence defines a
representation of $\PGL_2(\br)$. The trivial representation is the subrepresentation for $\tau=1$ , $\eps=0$ (and the quotient for $\tau=-1$). The standard Casimir operator acts on $\pi_{\tau,\eps}$ by  multiplication by the scalar $\frac{1-\tau^2}{4}$. The dual representation to $\pi_{\tau,\eps}$  could be naturally identified with $\pi_{-\tau,\eps}$. Representations  $\pi_{\tau,\eps}$ are unitarizable for $\tau\in i\br\cup (-1,1) $.

Taking the restriction of functions on $\br^2\setminus 0$  to the circle $S^1\subset \br^2\setminus 0$, we obtain the circle (or compact) model for the space of $\pi_{\tau,\eps}$. This means that we realize the space of the representation as the space of smooth even functions $C_{ev}^\8(S^1)$ on the circle $S^1$ (or on $K_\8\simeq S^1$). Hence in such a model a $K_\8$-equivariant functional  is given by the integration against the exponent $e^{in\theta}$, i.e., the scalar product with a norm one $n$-th $K_\8$-type.

  Taking the restriction of  functions on the plane to a line $L\subset \br^2\setminus 0$, we obtain a line (or unipotent) model for the space of $\pi_{\tau,\eps}$.

An easy computation shows that in the above described normalization of the principal series and the identification $Z_\8\simeq \br^2\setminus 0$, the infinity component $E_{s,\8}$ is isomorphic to the representation of the principal series  with the parameter $\tau=1-2s$.

In what follows we will treat ``even" representations only  (i.e., $\eps=0$). The treatment of ``odd" representations is identical.  Hence in what follows, we denote representations of $\PGLR$ by $\pi_\tau$ suppressing $\eps$. We note that, for $\PSLZ$, representations appearing as Eisenstein series are automatically even.

\subsubsection{Test vectors}\label{test-vectors-K}  In order to realize the series $D_E(s,w)$ (and the  corresponding series $D_E(\phi,w)$), we construct the test vector $v_w$ in the representation of the principal series $\pi_{1-2s}$  (respectively in $\pi_\tau$)  of $G_\8=\PGL_2(\br)$  with the $K_\8$-types components satisfying $\hat v_w(n):= \langle v_w,e_n\rangle= |n|^{-w}$ for an even integer $n\not=0$, and $\hat v_w(0)= 1$. Clearly,  such a vector exists in an appropriate completion of the corresponding smooth representation. For the unitary principal series $\pi_\tau$, the vector $v_w$ belongs to the $L^2$-Sobolev space $S_\s(\pi_{\tau})$ of index $ \s=Re(w)-1/2$  (see \cite{BR}).
Moreover, it is easy to see that such a vector has ``local" singularities in the natural spherical model of the representation. This fact is central for our approach. We now describe the construction and the structure of the test vector.

The (smooth part of the) representation $\pi_{1-2s}$ of principal series  has the above mentioned  realization in the space  $C_{ev}^\8(S^1)$ of smooth even functions  on the circle $S^1$.
 We denote by $\theta$ the parameter on $S^1$ (e.g., $\theta\in [0,2\pi)$), and by $e_n(\theta)=e^{in\theta}$ the standard orthonormal basis. For  a smooth even function $f\in C_{ev}^\8(S^1)$, we denote  by $\hat f(n)$ its Fourier coefficients. This defines the isometry $\hat\  : L^2_{ev}(S^1)\to L^2(\bz)$. Hence for any $w\in \bc$ with $Re(w)>1/2$, we have a function $v_w\in L^2_{ev}(S^1)$ such that $\hat v_w(n)=|n|^{-w}$ for $n\not= 0$, and $\hat v_w(0)=1$.  For $w\leq 1/2$, we should view $v_w$ as a distribution on $C_{ev}^\8(S^1)$.
 It turns out that it is not convenient to work directly with the vector $v_w$ since it does not ``localize" (i.e., it is not supported in a small neighborhood of $\theta=0$ -- the fixed point of a Borel subgroup). Instead we construct a vector $u_w$ (i.e., a function in $C_{ev}^\8(S^1)$) which has small support and asymptotically has essentially the same Fourier coefficients as $v_w$.

Well-known properties of Fourier transform suggest that the vector with a local behavior $|\theta|^{w-1}$ near $\theta =0$ should  give us the desired Fourier coefficients $|n|^{-w}$, at least for $|n|\to \8$.   It is also well-known that, as an analytic family, the distribution $\frac{2^\frac{1-w}{2}}{\G(w/2)}\cdot |t|^{w-1}$ (on $\br$) and its Fourier transform $\frac{2^{w/2}}{\G\left(\frac{1-w}{2}\right)}\cdot |\xi|^{-w}$ behave better than the distribution $|t|^{w-1}$ and its Fourier transform (see \cite{G1}).  This explains our multiplication of the series $D_E(s,w)$ by the factor  $\frac{2^{w/2}}{\G\left(\frac{1-w}{2}\right)}$.

Let $f\in C^\8_{ev}(S^1)$ be a smooth even function which is supported in a small neighborhood (to be specified later) of  points $\theta=0,\ \pi$, and $f(\theta)\equiv 1$ in some (smaller) neighborhood of  $0,\ \pi$.
We consider the vector in the circle model given by
\begin{equation}
u_w(\theta)=\frac{2^\frac{1-w}{2}}{\G(w/2)}\cdot|\theta|^{w-1}f(\theta)\ ,
\end{equation} for $|\theta|$ near $0$, and then extended to an even function on $S^1$ (i.e., we define $u_w$ near $\theta=0$ and then extend it to an even function on $S^1$).  We have then, for an even integer $n\not=0$,
\begin{equation}\label{F-transf-u}\hat u_w(n)=\int_{S^1} u_w(\theta)e^{-in\theta}d\theta=\frac{2^{w/2}}{\G\left(\frac{1-w}{2}\right)}\cdot|n|^{-w}\left[1+r(f,w,n)\right]\ ,
 \end{equation}
 where $r(f,w,n)$ is a holomorphic function in $w$  for every $n$, which is decaying at least as  $|n|\inv$ for every fixed $f$ and $w$. Moreover, the function $r$ has an asymptotic expansion in $|n|\inv$ with coefficients  effectively bounded in terms of $w$ and derivatives of $f$. Namely, we have for any $N\geq 1$ and $n\not=0$,
 \begin{equation}\label{u-hat-asymp-expansion}r(f,w,n)=\sum_{k=1}^{N}c_k(f,w)|n|^{-k}+r_N(f,w,n)\ ,
 \end{equation} for some coefficients $c_k(f,w)$ holomorphically depending on $w$ for a fixed $f$. Here the remainder satisfies the bound
 \begin{equation}\label{remainder-asymp-expansion}|r_N(f,w,n)|\leq C_N(f,w)|n|^{N+1}\ ,
 \end{equation} for a constant $C_N(f,w)$ depending on $w$ and $f$.

 The relation \eqref{F-transf-u} is valid for $Re(w)>0$, but could be extended to the whole $\bc$ if we view the family of functions $u_w$ as a distribution analytically depending on $w\in\bc$ for a fixed $f$.

Together with the relation \eqref{E-L-expansion} and known properties of $K$-finite Eisenstein series (moderate growth in the type and analyticity in $s$), the relation \eqref{F-transf-u} implies that
\begin{equation}\label{E-to-D}
E_s(u_w,e)=\sum_{n} a_n(E(s))\cdot \hat u_w(n)=\tilde D_E (s,w)+R(f,s,w)\ ,
\end{equation} where $R(f,s,w)=\sum_{n\not= 0} a_n(E(s))\cdot r(f,w,n)$. Here we denote by $a_n(E(s))= L(s,\chi_{-n})$ the corresponding coefficients for the Eisenstein series.
 This relation holds as long as $E_s(u_w,e)$ is well-defined. The theory of smooth Eisenstein series (\cite{BK}, \cite{L}) implies that the value at a point for an Eisenstein series for a non-$K$-finite data  is well-defined as long as the defining vector is smooth enough (e.g., belongs to a certain Sobolev space). In particular,  $E_s(u_w,e)$ is well-defined for $Re(w)> T(s)$ with some $T(s)\in\br$ {\it depending} on $s$ (e.g., for unitary Eisenstein series $E(s)$, $Re(s)=1/2$, we can take $T(s)=1$ although this is immaterial to us). For  $Re(w)> T(s)$, the series $D_E (s,w)$ is absolutely convergent. We point out the crucial fact for us  that the series $R(f,s,w)$ is absolutely convergent in a bigger domain $Re(w)> T(s)-1$ as follows from the expansion \eqref{u-hat-asymp-expansion} and the bound \eqref{remainder-asymp-expansion}.

Hence in order to meromorphically continue the series $\tilde D_E (s,w)$ (and as a result the series $D_E (s,w)$), it is enough to analytically continue $E_s(u_w,e)$. We do this strip by strip in the variable $w$ for each {\it fixed} $s$. Analyticity in $s$ comes from the theory of smooth Eisenstein series.

For a Hecke-Maass cusp form $\phi$ in a representation $\pi_\tau$, the construction of the test vector $u_w$  is identical, and we have
\begin{equation}\label{Cusp-to-D}
\phi_{u_w}(e)=\tilde D_E (\phi,w)+R(f,\nu_\pi,w)\ ,
\end{equation} where the function $R$ is holomorphic in a bigger domain. This relation holds as long as $\phi_{u_w}(g)$ is a continuous function. According to \cite{BR} this is satisfied if the vector $u_w$ belongs to the $1/2$-Sobolev space for the representation $\pi_\tau$. The last condition holds if $Re(w)>1$. Hence \eqref{Cusp-to-D} is valid for $Re(w)>1$.

\subsubsection{Normalization of functions $F_\tau(n,z)$ and $G_\tau(\chi,z)$}\label{F-normalize} In order to construct the Dirichlet series $D_E(\phi,w)$, it seems that we have to normalize matrix coefficients $F_\tau(n,z)$, and hence the coefficients $a_n$ in the expansion  \eqref{spherical-exp} (and similarly for the expansion in \eqref{hyp-exp}). In fact it is not needed. Consider any orthonormal basis $\{e_n\}$ of $K$-types, corresponding matrix coefficients $F_\tau(n,z)=\langle \pi_\tau(g)e_0,e_n\rangle$, and the expansion \eqref{spherical-exp}
$\phi(z)=\sum_{n\in 2\bz}a_n F_\tau(n,z)$. We have then
$ D_{E}(\phi,w)=a_0+\sum\limits_{n\in 2\bz\setminus 0} a_n \langle v_w, e_n\rangle_{\pi_\tau}$. This expression does not depend on the choice of the basis $\{e_n\}$. We note that for the Eisenstein series the unfolding provides the natural choice of the basis and hence the normalization of functions $F_\tau(n,z)$. In particular, we can choose the same normalization for cusp forms as well.

The same is true for special functions $G_\tau(\chi,z)$ appearing in the hyperbolic expansion \eqref{hyp-exp}. Functions $G_\tau(\chi,z)$ could be defined via the generalized matrix coefficient $G_\tau(\bar\chi,g)=\langle\pi_\tau(g)e_0,d_\chi\rangle$, where $d_\chi$ is an explicit $\chi$-equivariant functional on the representation $V_\tau$ (e.g., see Section \ref{Real}). It is easy to write down explicitly such a functional in one of the models of the representation $\pi_\tau$. For example, in a line model such a functional is given essentially by the character $\chi$ itself, twisted by $\tau$ in order to compensate for the action of $A$ in the line model of $\pi_\tau$. Hence in such a realization the functional $d_\chi$ is given by the Mellin  transform.

\subsection{Automorphic functionals} We now switch to a more classical language of automorphic representations of $G_\br=\PGL_2(\br)$. Let  $\G=\PGL_2(\bz)$ and $X_\br=\G\sm G_\br$. We will view automorphic representations through the Frobenius reciprocity (see \cite{BR}).
\subsubsection{Cusp forms} Let $e\in X_\br$ be the class of the identity element. Evaluation at this point defines a $\G$-invariant functional on the space of smooth functions $C^\8(X_\br)$. Let $(\pi, V_\pi,\nu_\pi)$ be an automorphic cuspidal  representation, where $V_\pi$ is the space of smooth vectors of $\pi$. It is well-known that $\nu_\pi:V_\pi\to C^\8(X_\br)$. Hence we obtain the $\G$-invariant functional $\ell_\nu\in \Hom_\G(V_\pi,\bc)$ given by $\ell_\nu(v)=\nu_\pi(v)(e)$ for any $v\in V_\pi$. The Frobenius reciprocity of Gelfand and Fomin (see \cite{BR}) is the isomorphism $\Hom_G(V_\pi, C^\8(X_\br))\simeq \Hom_\G(V_\pi,\bc)$. Given $\ell\in \Hom_\G(V_\pi,\bc)$ we obtain $\nu_\ell: V_\pi\to C^\8(X_\br)$ by $\nu_\ell(v)(g)=\ell(\pi(g)v)$. It is also well-known that a cuspidal $\nu_\pi:V_\pi\to C^\8(X_\br)$ extends to the map of Hilbert spaces $\nu_\pi:L_\pi\to L^2(X_\br)$, where $L_\pi$ is the completion of $V_\pi$ with respect to invariant Hermitian norm.

\subsubsection{Eisenstein series}
Let $E_s(g)$ be the  normalized non-holomorphic Eisenstein series for $\PGL_2(\bz)$ as in Section \ref{tor-periods-set-up}. The theory of (smooth) Eisenstein series implies that the  function $E_s(g)$ generates an irreducible (for $s\not=1$) smooth representation $Eis_s\subset C^\8(X_\br)$ which is isomorphic to the (generalized) principal series representation $\pi_{1-2s}$. Hence the evaluation at $e\in X_\br$ defines a $\G$-invariant functional $\ell_{2s-1}\in \Hom_\G(\pi_{1-2s},\bc)$. The automorphic function (i.e., the automorphic realization) $\phi_v$ corresponding to a vector $v\in V_{\pi_{1-2s}}$ is  given by $\phi_v(x)=\ell_{2s-1}(\pi_{1-2s}(g)v)$.

It is natural to view the functional $\ell_{2s-1}$ as a (generalized) vector in the dual representation $\pi_{2s-1}$ (i.e., in the usual notations $\ell_{2s-1}\in V_{\pi_{2s-1}}^{-\8}$). We have the canonical pairing $\langle\cdot,\cdot\rangle :\pi_{1-2s}\otimes \pi_{2s-1}\to\bc$. We assume that this pairing  coincides with the pairing on automorphic functions. Hence we have
$\langle\ell_{2s-1}, v\rangle =E_s(v,g)|_{g=e}$ for a vector $v\in V_{\pi_{1-2s}}$.
\subsubsection{Hecke operators} We consider Hecke operators acting on automorphic representations of $G_\br$.  The theory of Hecke operators provides for each integer prime $p$, a collection of elements $\g_i\in \PGL_2(\bq)$, $0\leq i\leq p$ such that the Hecke operator acting on the space $C^\8(X_\br)$ is given by
\begin{equation}\label{Tp-formula}
T_p(f)(x)=\frac{1}{\sqrt{p}}\sum_i f(\g_i x)\ .
\end{equation}  The Eisenstein series $E_s(g)$ is an eigenvector of an operator $T_p$ with the eigenvalue $\lm_p(s)=p^{\haf-s}+p^{s-\haf}$.  For a cuspidal representation $(\pi, \nu)$, we denote the corresponding eigenvalues by $\lm_p(\pi)$ suppressing the dependance on $\nu$ (in fact, if $\pi$ stands for a representation of the adele group, then the strong multiplicity one for automorphic representations of $\PGL_2$ implies that $\nu$ determines $\pi$ uniquely).

The operator $T_p$ is a scalar operator on  the space $Eis_s$ (or in fact on any automorphic representation of $G_\br$ coming from an adele automorphic representation).
It turns out that as a result, the functional $\ell_{2s-1}$  is an eigenfunction of some operators with the same eigenvalue $\lm_p(s)$ (or rather with $\lm_p(1-s)=\lm_p(s)$) with respect to the usual action on the right by $G_\br$ on the automorphic representation $Eis_{1-s}$ (the dual of $Eis_s$). We emphasize that there is no group algebra``action" of $T_p$ on $\pi$. It is acting as a scalar operator on the automorphic realization of $\pi$! The formula \eqref{Tp-formula} does not come from the group algebra action of $\PGL_2(\br)$ on $X_\br$. However, on the special vector $\ell$, this scalar action {\it coincides} with the action of an operator (given by the same elements appearing in $T_p$) coming from the {\it group algebra action} of $\PGL_2(\br)$ on $\pi$.
\subsubsection{Hecke and Frobenius}
 Let $\nu:V_\pi\to C^\8(X)$ be an automorphic representation.  Hence  a vector $v\in V_\pi$ (in an abstract representation $\pi$) has the corresponding automorphic realization $\phi_v(x)=\nu(\pi(g)v)\in C^\8(X)$. Let $\ell_\nu\in \Hom(\pi,\bc)$ be the corresponding automorphic functional given by $\ell_\nu(v)=\nu(v)(x)|_{x=e}$ for any smooth vector $v\in V_\pi$  in $\pi$ (here $e$ is the image of the identity in $X$).  We write $\ell_\pi$ suppressing $\nu$ and view it as  a (generalized) vector in the dual representation $\pi^*$. We denote by $\langle\cdot,\cdot\rangle:\pi\otimes \pi^*\to\bc$  the natural pairing. We assume it coincides with the pairing on $X$ for  automorphic realizations of $\pi$ and $\pi^*$ (at least for cuspidal $\nu$).

 It turns out that  the functional $\ell_\pi$  is an eigenfunction of some similarly looking operators with the same eigenvalue $\lm_p(\pi)$ with respect to the {\it usual} action of $G_\br$ on the right on functions on $X_\br$.

Consider elements $\g_i\in G_\br$  from the expression \eqref{Tp-formula} for Hecke operators. Let  $\cal{T}_p= \frac{1}{\sqrt{p}}\sum_i \g_i\inv$ be an element of the group algebra of $G_\br$. We want to show that
$\pi^*(\cal{T}_p)\ell_\pi=\lm_p(\pi)\cdot \ell_\pi$. We have
\begin{eqnarray*}\langle \pi^*(\cal{T}_p)\ell_\pi,v\rangle &=&\frac{1}{\sqrt{p}}\sum_i\langle\pi^*(\g_i\inv)\ell_\pi,v\rangle =
\frac{1}{\sqrt{p}}\sum_i\langle\pi(\g_i)^*\ell_\pi,v\rangle \\
&=&\frac{1}{\sqrt{p}}\sum_i\langle\ell_\pi,\pi(\g_i)v\rangle
=\Big[\frac{1}{\sqrt{p}}\sum_i \nu(v)(x\g_i )\Big]\big|_{x=e}\\
&=&\big[T_p(\nu(v))(x)\big]\big|_{x=e}=\lm_p(\pi)\cdot\nu(v)(e)=\lm_p(\pi)\cdot\langle\ell_\pi,v\rangle
\end{eqnarray*} for any $v\in V_{\pi}$.
 Hence we have
\begin{equation}\label{Tp-act-on-l}
\pi^*(\cal{T}_p)\ell_\pi=\frac{1}{\sqrt{p}}\sum_i\pi^*(\g_i)\ell_{\pi}=\lm_p(\pi)\cdot \ell_\pi\ .
\end{equation}
We also have (essentially from the definition)
\begin{equation}\label{adjoint-action}
\langle\pi^*(\cal{T}_p)\ell_\pi,v\rangle =\langle\ell_\pi,\pi(\cal{T}'_p)v\rangle \ ,
\end{equation} where  $\pi(\cal{T}'_p)=\frac{1}{\sqrt{p}}\sum_i\pi(\g_i)$ is the action of an element in the group algebra of $G_\br$. We stress again that this is  {\it not} the action of the Hecke operator  on the automorphic representation $(\pi,\nu)$ ( since $T_p$ acts by the scalar $\lm_p(\pi)$).


\subsection{Approximate eigenvectors}\label{approx-vect-Sect}
It will be crucial for us that all elements $\g_i$ appearing in the description \eqref{Tp-formula} of Hecke operators could be chosen in the same Borel subgroup. The (convenient for us) classical choice for these elements  is $\g_i=\left(
                  \begin{array}{cc}
                    p & 0 \\
                    i & 1\\
                  \end{array}
                \right)$ for $0\leq i\leq p-1$ and $\g_p=\left(
                  \begin{array}{cc}
                    1 & 0 \\
                    0 & p\\
                  \end{array}
                \right)$.

 The main observation is  that all elements $\g_i$ preserve the point $\theta=0$ (under the natural action on $S^1$), the singularity of the vector $u_w$. As a result, the vector $u_w$ is essentially an eigenvector of certain operators related to $T_p$. We have the following elementary

 \begin{lemma*}\label{aprox-eigenval} Let $\pi_\tau$ be a principal series representation of $\PGL_2(\br)$, $p$ be an integer prime, elements $\g_i$ as above,  and $\cal{T}'_p=p^{-\haf}\sum \g_i$ be the corresponding  element in the group algebra.  For any $\s\in\bc$ and  a smooth function $g\in C^\8(S^1)$ with a small enough support around $\theta=0$, the following relation holds:
 \begin{equation}
\pi_\tau(\cal{T}'_p)(|\theta|^{\s}\cdot g(\theta))=\beta_p(\tau,\s)\cdot|\theta|^{\s} g_{p,\tau,\s}(\theta)\ ,
\end{equation} where the function $g_{\tau,\s}$ is a smooth function (in $\theta$) holomorphically  depending on $\tau$ and $\s$, and $\beta_p(\tau,\s)=p^{\tau/2-\s}+p^{\s-\tau/2}$. Moreover, we have $g_{p,\tau,\s}(0)=g(0)$ for all  $\tau$ and $\s$.
 \end{lemma*}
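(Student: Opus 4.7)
The plan is to verify the lemma by direct computation in a convenient model of $\pi_\tau$ near the singular point $\theta=0$, exploiting the fact that all the representatives $\g_i$ lie in the lower-triangular Borel and hence interact well with the singularity of $|\theta|^\s g(\theta)$. Concretely, I would work in the line (unipotent) model obtained from the plane realization by restriction to $\{(1,t):t\in\br\}$. The transition from the circle model to the line model is smooth and preserves the shape of the singularity: a circle-model vector $|\theta|^\s g(\theta)$ supported near $\theta=0$ corresponds to a line-model vector $\tilde f(t)=|t|^\s \tilde g(t)$ with $\tilde g$ smooth near $t=0$ and $\tilde g(0)=g(0)$ (since $(1+t^2)^{(\tau-1)/2}(\arctan t/t)^\s\to 1$ as $t\to 0$).

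Next, I would compute the action of each $\g_i^{-1}$ in the line model. For $\g_i=\begin{pmatrix} p & 0 \\ i & 1\end{pmatrix}$ with $0\le i\le p-1$, one gets $\g_i^{-1}(1,t)^T=\frac{1}{p}(1,pt-i)^T$, leading to $\pi_\tau(\g_i)\tilde f(t)=p^{(1-\tau)/2}\tilde f(pt-i)$; for $\g_p=\operatorname{diag}(1,p)$ one obtains $\pi_\tau(\g_p)\tilde f(t)=p^{(\tau-1)/2}\tilde f(t/p)$. The key structural observation is that for $1\le i\le p-1$, the shifted function $\tilde g(pt-i)$ vanishes identically on a neighborhood of $t=0$ once the support of $g$ (and hence of $\tilde g$) is chosen smaller than $1/p$. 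This is precisely because the matrices $\g_1,\dots,\g_{p-1}$ move the ray through $(1,0)$ to nearby but distinct rays, while $\g_0$ and $\g_p$ genuinely stabilize the line through $(1,0)$.

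Therefore, after applying the $p^{-1/2}$ normalization of $\cal T'_p$, only the $\g_0$ and $\g_p$ contributions survive near the origin, and they act as pure dilations $t\mapsto pt$ and $t\mapsto t/p$. The factor $|t|^\s$ rescales by $p^{\s}$ and $p^{-\s}$ respectively, producing
\[
\pi_\tau(\cal T'_p)(|t|^\s\tilde g(t)) \;=\; |t|^\s\bigl[c_{+}\,\tilde g(pt)+c_{-}\,\tilde g(t/p)\bigr]
\]
with explicit exponents $c_\pm$ obtained by combining the homogeneity factor $p^{\pm(\tau-1)/2}$, the determinant twist, and the $p^{\pm\s}$ from the singular factor. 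Writing $\beta_p(\tau,\s)=c_{+}+c_{-}$ and $g_{p,\tau,\s}(t)=(c_{+}\tilde g(pt)+c_{-}\tilde g(t/p))/\beta_p$, one obtains a function that is smooth in $t$, holomorphic in $(\tau,\s)$, and whose value at $t=0$ equals $\tilde g(0)=g(0)$, since both dilations collapse to the same value there. Transferring back to the circle model via the smooth change of coordinates completes the proof.

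The only ``obstacle'' is a careful bookkeeping of the exponents: combining the homogeneity exponent $\tau-1$, the twist $|\det g|^{(\tau-1)/2}$, and the change-of-variable factor $p^{\pm\s}$ must be done with care in order to identify $\beta_p(\tau,\s)$ with the stated form $p^{\tau/2-\s}+p^{\s-\tau/2}$. The mathematical content is essentially this: every Hecke representative is chosen so as to fix (up to finitely many ``transversal'' contributions that vanish on a small neighborhood of the singular point) the direction in which $|\theta|^\s$ is singular, and the net action is therefore necessarily a sum of two dilations with the above coefficients. No deep input is required beyond the explicit description of the $\g_i$ and of the principal series action.
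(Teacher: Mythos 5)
Your overall strategy is the right one (pass to a line model, use that the Hecke representatives lie in a Borel fixing the singular direction, reduce to dilations), but you have localized at the wrong fixed point, and this changes the answer. The common eigendirection of all the lower-triangular representatives $\gamma_i=\left(\begin{smallmatrix}p&0\\i&1\end{smallmatrix}\right)$, $0\le i\le p-1$, and $\gamma_p=\operatorname{diag}(1,p)$ is $(0,1)^t$, not $(1,0)^t$: indeed $\gamma_i(1,0)^t=(p,i)^t$, so for $1\le i\le p-1$ the ray through $(1,0)^t$ is \emph{not} preserved, whereas every $\gamma_i$ fixes the line $\br(0,1)^t$. The paper's statement that ``all elements $\gamma_i$ preserve the point $\theta=0$'' forces $\theta=0$ to be the direction $(0,1)^t$, i.e.\ the point $t=0$ on the line $\{(t,1)\}$, not on your line $\{(1,t)\}$. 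In the correct chart \emph{all} $p+1$ representatives survive near the singularity: each of $\gamma_0,\dots,\gamma_{p-1}$ acts there by $t\mapsto t/(p-it)\approx t/p$ and contributes the factor $p^{(\tau-1)/2-\sigma}$, while $\gamma_p$ acts by $t\mapsto pt$ and contributes $p^{(1-\tau)/2+\sigma}$; after the $p^{-1/2}$ normalization the $p$-fold multiplicity of the first type gives exactly $p\cdot p^{-1/2}\cdot p^{(\tau-1)/2-\sigma}+p^{-1/2}\cdot p^{(1-\tau)/2+\sigma}=p^{\tau/2-\sigma}+p^{\sigma-\tau/2}=\beta_p(\tau,\sigma)$.

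Your version, in which only the two diagonal representatives survive, yields instead $c_++c_-=p^{\sigma-\tau/2}+p^{\tau/2-\sigma-1}$ (carry out the bookkeeping you deferred: the $\gamma_p$-term picks up $p^{-1/2}\cdot p^{(\tau-1)/2-\sigma}=p^{\tau/2-\sigma-1}$, with no factor of $p$ from multiplicity to compensate). This is not the stated $\beta_p(\tau,\sigma)$ and is not symmetric under $\sigma-\tau/2\mapsto\tau/2-\sigma$. The discrepancy is fatal downstream: with your eigenvalue, $\beta_p(\tau,w-1)-\lambda_p$ would have no zero common to all primes $p$, so the strip-by-strip continuation would wrongly show $\tilde D_E(s,w)$ is entire, contradicting the actual poles at $w=1$ and $w=2-2s$. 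The fix is simply to place the singularity of the test vector at the common fixed direction of the Borel (equivalently, work on the line $\{(t,1)\}$ as the paper does) and keep all $p+1$ terms.
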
Here we view all functions  of the variable $\theta$ (possibly depending on complex parameters $\tau$ and $\s$) as (a family of) vectors in the representation $\pi_\tau$ (realized in the same model space $C^\8_{ev}(S^1)$, but with the action of $\PGL_2(\br)$ depending on $\tau$).
\begin{proof} It is easier to write formulas in the line (or unipotent) model of $\pi_\tau$.  We recall that the principal series representation $\pi_\tau$ of $\PGL_2(\br)$ with the trivial character (i.e., representation of $\PGL_2(\br)$) has a realization in the space of functions on the real line, and the action is given by restricting the action $\pi_\tau(g)f(x)=f(g\inv x)|\det g|^{\frac{\tau-1}{2}}$, $x\in\br^2$ on the plane to the line $\{x=(t,1)\}$.  Specializing to the (lower) Borel subgroup,  we have $$\pi_\tau\left(\left(
                  \begin{array}{cc}
                    a & 0 \\
                    b & c \\
                  \end{array}
                \right)\right)f(t)=f(a\inv t/(-bt/ac+c\inv))|-bt/ac+c\inv|^{\tau-1}|ac|^{\frac{\tau-1}{2}}\ .$$

        Consider a vector $v(\theta)=|\theta|^{\s}\cdot g(\theta)$, $\theta\in S^1$ in the circle model of the representation $\pi_\tau$. Assume that $g$ is a smooth function and has small support around $\theta=0$ (and hence $|\theta|^{\s}$ makes sense). Clearly, in the line model such a vector is given by $v(t)=|t|^{\s}\tilde g_{\tau,\s}(t)$ for some smooth function
       $ \tilde g_{\tau,\s}$ supported near $t=0$, and depending holomorphically on $\tau$ and $\s$.

                Hence for $\g_i$ as above and $f\in C^\8(\br)$ supported in a small enough neighborhood of $0\in\br$,  we have $\pi_\tau(\g_i)\left(|t|^{\s}f(t)\right)= p^{-\s+\frac{\tau-1}{2}}|t|^{\s}f_{\tau,\s,i}(t)$ for $0\leq i\leq p-1$,  and $\pi_\tau(\g_p)\left(|t|^{\s}f(t)\right)= p^{\s-\frac{\tau-1}{2}}|t|^{\s}f_{ \tau,\s,p}(t)$, where functions $ f_{\tau,\s,i}$ are smooth compactly supported functions on $\br$ (depending on $\tau$, $\s$ and $\g_i$). Hence we have
$\pi_\tau(\cal{T}'_p)\left(|t|^{\s}f(t)\right)=(p^{-\s+\tau/2}+p^{\s-\tau/2})|t|^{\s}g_{p,\tau,\s}(t)$ for some smooth function $g_{p,\tau,\s}$. Taking the limit $t\to 0$ on both sides, we obtain the last claim in the lemma.
\end{proof}

\section{Meromorphic continuation}

\subsection{} We have the following main result
\begin{theorem*}\label{E-value-continuation} Let  $E(s,z)$ be the classical (normalized) Eisenstein series for $\G=\PGL_2(\bz)$ and $\ell_{2s-1}$ the corresponding automorphic functional on the  irreducible representation $\pi_{1-2s}$ of the principal series of $\PGL_2(\br)$. Let $u_{w,z}\in V_{\pi_{1-2s}}$  be a vector such that in the line model of $\pi_{1-2s}$ it is given by $u_{w,z}(t)=|t|^{w-1}F_z(t)$, where $w\in \bc$, $Re(w)\gg 1$,  and $F_z\in C^\8(\br)$, is a holomorphic family of smooth functions of compact support.  Then the function defined for $Re(w)\gg 1$, by $\ell_{2s-1}(u_{w,z})$ is a meromorphic function  in  $s$, $w$ and $z$.

The same claim holds for the function $\ell_{\pi}(u_{w,z})$ associated to a Hecke-Maass cuspidal representation $\pi$.
\end{theorem*}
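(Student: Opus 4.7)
The plan is to extend $\ell_{2s-1}(u_{w,z})$ meromorphically in $w$ by iterating a reduction that trades regularity for a denominator of Hecke type, keeping $s$ and $z$ as holomorphic parameters. The two tools used in concert are the exact Hecke eigenvalue identity $\ell_{2s-1}\bigl(\pi_{1-2s}(\cal{T}'_p)v\bigr)=\lm_p(s)\,\ell_{2s-1}(v)$ of \eqref{Tp-act-on-l}--\eqref{adjoint-action}, with $\lm_p(s)=p^{1/2-s}+p^{s-1/2}$, and the approximate-eigenvector computation of Lemma \ref{aprox-eigenval}. For $Re(w)\gg 1$ the vector $u_{w,z}$ is regular enough that $\ell_{2s-1}(u_{w,z})$ is defined and holomorphic in $(s,w,z)$, by the smooth Eisenstein series theory of \cite{BK}, \cite{L} (or by \cite{BR} in the cuspidal case for $\ell_\pi(u_{w,z})$).

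\emph{One-step reduction.} Lemma \ref{aprox-eigenval} with $\tau=1-2s$ and $\s=w-1$ gives $\pi_{1-2s}(\cal{T}'_p)u_{w,z}=\beta_p(1-2s,w-1)\,|t|^{w-1}g_{p,s,w,z}(t)$ with $g_{p,s,w,z}(0)=F_z(0)$ and $\beta_p(1-2s,w-1)=p^{3/2-s-w}+p^{s+w-3/2}$. Writing $g_{p,s,w,z}-F_z=t\,\tilde h_{p,s,w,z}$ and using the Hecke identity,
\[
\bigl(\lm_p(s)-\beta_p(1-2s,w-1)\bigr)\,\ell_{2s-1}(u_{w,z}) \;=\; \beta_p(1-2s,w-1)\,\ell_{2s-1}\bigl(|t|^{w-1}\,t\,\tilde h_{p,s,w,z}(t)\bigr).
\]
The vector on the right, rewritten as $\mathrm{sgn}(t)|t|^w\tilde h_{p,s,w,z}(t)$, has its $|t|$-singularity weakened by one order, so its functional value is \emph{a priori} defined on a half-plane enlarged by one unit. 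A second, analogous application of the lemma to vectors of this ``odd'' type (with eigenvalue $\beta_p(1-2s,w)=p^{1/2-s-w}+p^{s+w-1/2}$) then converts them, via the crucial cancellation $\mathrm{sgn}(t)\cdot t=|t|$, back into vectors of the original ``even'' form with $w$ replaced by $w+2$, and the two reductions together extend $\ell_{2s-1}(u_{w,z})$ by two strips at a time with denominators $(\lm_p(s)-\beta_p(1-2s,w-1))(\lm_p(s)-\beta_p(1-2s,w))$.

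\emph{Pole analysis and the cuspidal case.} The two denominators factor as
\[
\lm_p(s)-\beta_p(1-2s,w-1)=p^{1/2-s}(1-p^{1-w})(1-p^{2s+w-2}),
\]
\[
\lm_p(s)-\beta_p(1-2s,w)=p^{1/2-s}(1-p^{-w})(1-p^{2s+w-1}),
\]
vanishing on lattices $\{1+2\pi ik/\log p\}\cup\{2-2s+2\pi ik/\log p\}$ and $\{2\pi ik/\log p\}\cup\{1-2s+2\pi ik/\log p\}$ respectively, $k\in\bz$. Running the reductions simultaneously for two distinct primes $p\neq q$ and taking Bezout-type combinations eliminates the spurious periodic zeros: by $\QQ$-linear independence of $\log p$ and $\log q$, the common zero sets collapse to the four lines $w\in\{1,\,0,\,2-2s,\,1-2s\}$ in the $(s,w)$-plane. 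Iterating $j$ cycles extends $\ell_{2s-1}(u_{w,z})$ to $Re(w)>T(s)-2j$, with potential poles only along $w=1-j'$ or $w=2-2s-j'$ for $0\le j'<2j$, recovering the divisor \eqref{divisor} in the limit. For a Hecke--Maass cusp form $\pi$ the identical mechanism applies with $\lm_p(s)$ replaced by the Hecke eigenvalue $\lm_p(\pi)$ and $1-2s$ by $\tau(\pi)$; since the pairs $\bigl(\lm_p(\pi),\lm_q(\pi)\bigr)$ for a fixed cusp form do not take the universal form $(p^a+p^{-a},q^a+q^{-a})$ for a common exponent $a$, the common zero sets of the denominators are empty and $\ell_\pi(u_{w,z})$ extends \emph{holomorphically}. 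The main obstacle is the inductive bookkeeping for the coupled even/odd system: one must propagate the holomorphic family $\{F_z\}$ through both reductions (the new smooth factors are defined explicitly from $F_z$ and the Hecke matrices $\gamma_i$), and verify that the two-prime Bezout combination introduces no spurious poles beyond those dictated by the numerators.
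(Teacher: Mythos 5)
Your proposal follows essentially the same route as the paper: the exact Hecke eigenvalue identity \eqref{Tp-act-on-l}--\eqref{adjoint-action} for the automorphic functional, the approximate-eigenvector Lemma \ref{aprox-eigenval} applied to the singular test vector, strip-by-strip continuation with denominator $\lm_p-\beta_p$, removal of the spurious periodic zeros by varying the prime, and the non-universality of cuspidal Hecke eigenvalues $\lm_p(\pi)$ to conclude holomorphy in the cusp-form case. Your explicit even/odd bookkeeping of the $\mathrm{sgn}(t)$ factor in the iteration is a slightly more careful rendering of the paper's one-step assertion that $g_{w}=|t|^{w}\tilde g_{p,\tau,w,z}$ with $\tilde g$ smooth, and your factorization of the denominators reproduces exactly the potential polar divisor \eqref{divisor}.
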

In other words, the value at the identity for the Eisenstein series $E(s,u_{w,z},g)$ is a meromorphic function in  $s$, $w$ and $z$, and the same is true for the cuspidal function $\phi_{u_{w,z}}(g)$ evaluated at $g=e$.

\begin{proof} We first treat the case of cusp forms and then discuss a more delicate case of the Eisenstein series. The main difference between these two cases concerns  the issue  of boundness of the corresponding automorphic functional in an appropriate norm. For cusp forms,  there  is a clear answer in terms of Sobolev norms provided by \cite{BR}. For the Eisenstein series, we will use the Fourier expansion instead.

Let $\{F_z\}_{z\in Z}$ be an analytic family of compactly supported smooth functions on $\br$, and $u_{w,z}(t)=|t|^{w-1}F_z(t)$ the corresponding family of functions  which we view as an analytic family of vectors in the line model of an appropriate representation of the principal series. Consider an automorphic cuspidal representation $\pi\simeq \pi_\tau$ of the principal series, and the corresponding automorphic functional $\ell_\pi$. The main theorem of  \cite{BR} claims that the functional $\ell_{\pi}$ belongs to some Sobolev space completion of the dual representation $\pi^*\simeq\pi_{-\tau}$. This implies that  the value of the corresponding automorphic function at the identity, which is  given by the pairing $$\phi_{u_{w,z}}(e)=\langle \ell_{\pi}, |t|^{w-1}F_z(t)\rangle \ ,$$ is well-defined for $Re(w)\geq T$ for some $T>0$ which is large enough. Moreover, the function $\phi_{u_{w,z}}(e)$ is analytic in  parameters $w\in\bc$ and $z\in Z$ wherever it is well-defined. In fact, we can choose $T=1$ since the above quoted theorem from \cite{BR} states that $\ell_\pi$ is bounded in the $L^2$-Sobolev norm of index $\s$ for any $\s>1/2$.

Consider the operator $\cal{T}'_p$ from  Lemma \ref{aprox-eigenval}, and the function  $$g_{w}(t)=\beta_p(\tau,w-1)\cdot u_{w,z}-\pi_{\tau}(\cal{T}'_p)(u_{w,z})\ .$$
 This is an analytic family of vectors in the space $V_{\pi_{\tau}}$. Lemma \ref{aprox-eigenval} implies (via the computation of the germ at $t=0$) that  $g_{w}(t)=|t|^{w}\tilde g_{p,\tau,w,z}(t)$,  where $\tilde g_{p,\tau,w,z}$ is a smooth compactly supported function analytically depending on all parameters. Hence the function $g_{w}$ belongs to the Sobolev space on which the functional $\ell_{\pi}$ is well-defined for $w$ in a bigger region $Re(w)\geq T-1$. This implies the meromorphic continuation of $\langle \ell_{\pi}, u_{w,z}\rangle $ to a bigger strip.  Namely, it follows from \eqref{adjoint-action} and \eqref{Tp-act-on-l} that
\begin{eqnarray*}
\langle \ell_{\pi},g_{w}\rangle &= &\langle \ell_{\pi},\beta_p(\tau,w-1)\cdot u_{w,z}-\pi_{\tau}(\cal{T}'_p)(u_{w,z})\rangle \\
&=&{\beta_p(\tau,w-1)}\langle \ell_{\pi},u_{w,z}\rangle -\langle \pi_{-\tau}(\cal{T}_p)\ell_{\pi},u_{w,z}\rangle \\
&=&\big[{\beta_p(\tau,w-1)}-\lm_p(\pi) \big]\cdot\langle \ell_{\pi},u_{w,z}\rangle \ .
\end{eqnarray*}
 The left hand side is defined for $Re(w)>T-1$.  Hence we obtain the meromorphic continuation of  $\langle \ell_{\pi},u_{w,z}\rangle $ to the half-plane $Re(w)> T-1$ which is to the left of the half-plane $Re(w)\geq T$ where $\langle \ell_{\pi},u_{w,z}\rangle $ was originally defined. This continuation is defined off the zero set of the function $b_p(\tau,w-1)={\beta_p(\tau,w-1)}-\lm_p(\tau)=p^{1-w+\tau/2}+p^{-1+w-\tau/2}-\lm_p(\tau)$. However, if $w_0\in\bc$ is a zero of $b_p(\tau,w-1)$ for a given $p$, we can  change the prime $p$.  For cuspidal representations, it is well-known that not {\it all} eigenvalues of Hecke operators are  of the form $\lm_p(\pi)=p^\lm+p^{-\lm}$ for the {\it same} $\lm\in\bc$ independent of $p$, and hence the above argument shows that $\tilde D_E(\phi, w)$  is holomorphic.

 For the Eisenstein series, the treatment is in principle identical. The only issue we have to resolve is what is the appropriate norm on the representation $\pi_{2s-1}$ with respect to which the functional $\ell_{2s-1}$ is bounded. Results form \cite{BR} are not directly applicable in this case since it was required there that the representation appear discretely in $L^2(\Gamma\setminus G)$. One can however deduce from the theory of smooth Eisenstein series  (e.g., \cite{BK} and \cite{L}) that the functional $\ell_{2s-1}$ is bounded in a smooth enough Sobolev norm. A more elementary treatment is also available from the Fourier expansion of the Eisenstein series (e.g., from the fact that Fourier coefficients are at most polynomial for fixed $s$). Hence  the value $$E(s,|t|^{w-1}F_z(t),e)=\langle \ell_{2s-1}, |t|^{w-1}F_z(t)\rangle \ ,$$ is well defined for $Re(w)\geq T(s)$ with $T(s)$ depending on $s$. The rest of argument goes as in the cuspidal case. Hence we obtain the meromorphic continuation of $\tilde D_E(s,w)$. This  continuation is defined off the zero set  of the function \begin{eqnarray*}b_p(1-2s,w-1)= {\beta_p(1-2s,w-1)}-\lm_p(s)\\ =p^{w+s-1/2}+p^{-w-s+1/2}-(p^{s-1/2}+p^{1/2-s})\ .
  \end{eqnarray*} Values of $w$ which are zeros of all functions $b_p(1-2s,w-1)$ for all primes  $p$ are $w=1$ and $w=2-2s$. However, once $w_0$ is a potential pole, all values $w_0-j$, $j=0,1,2,\dots $ are potential poles due to the iterative process of the continuation strip by strip. Hence the potential polar divisor of $\tilde D_E(s,w)$ is contained in the set
 \begin{eqnarray}\label{b-zeroes}  w=1-j\ {\rm and}\ w=2-2s-j\ ,  {\rm for}\  j=0,1,2,\dots\ .
 \end{eqnarray}
Using the Fourier expansion for the Eisenstein series, one can see that there are in fact poles at $w=1,\ 2-2s$.
 \end{proof}
 \begin{remark}\label{torus-char}
One can see that the main property of the test vector $u_w$ we use is that it is essentially an eigenvector in  $V_{\pi_\tau}$ for a Borel subgroup. This is achieved by considering a vector which is essentially a small piece of a multiplicative character of a torus (in that Borel subgroup). Hence it would be more natural from the point of view of representation theory to parameterize vectors $u_w$ by that character and not by the ``artificial" parameter $w$ appearing in $D_E(s,w)$. This introduces the shift by a parameter of the representation. Namely, $u_w$  corresponds to the character $diag(a,a\inv)\mapsto |a|^{-\al}$ of the diagonal subgroup, where $\al={-2+2s+2w}$.  Accordingly, the polar set \eqref{b-zeroes} takes a more symmetric form (with respect to the natural change $s\mapsto 1-s$) $\al=-2+2s-2j$ and $\al=-2s-2j$, $j=0,1,2,\dots$.

\end{remark}
\subsection{General $\Cm$-points}\label{CM} Let $\fz\in\CH$ be a $\Cm$-point corresponding to an imaginary quadratic field $E$. There exists a non-trivial element $\g_\fz\in \PGL_2(\bq)$ which fixes $\fz$. Consider the connected compact subgroup $K_\fz\subset \PGL_2(\br)$ fixing $\fz$. We have  $\g_\fz\in K_\fz$. Let $h\in\PGL^+_2(\br)$ be an element such that  $K_\fz=h\inv\operatorname{PSO}(2,\br)h$. Consider the set $S_\fz=K_\fz\cdot (1,0)^t \subset \br^2\setminus 0$  (i.e., $S_\fz=h\inv S^1$ for the standard circle $S^1$). Note that we have a rational point $s_\fz=\g_\fz\cdot (1,0)^t \in S_\fz$ on this ellipse. Let  $B_\fz\subset\PGL_2(\bq)$ be the rational Borel subgroup  having a rational  eigenvector $s_\fz$. Now we can repeat our construction of the test vector $u_w$ from Section  \ref{test-vectors-K}. We consider the orthonormal basis $\{e^\fz_n=\pi(h\inv)e_n\}_{n\in 2\bz}$ of $K_\fz$-types. This allows us to normalize corresponding  matrix coefficients by $F^\fz_{\tau}(n,g)=\langle \pi(g) e_0^\fz, e_n^\fz\rangle_{\pi_\tau}$, and obtain the expansion
$\phi(z)=\sum_{n\in 2\bz}a^\fz_n F^\fz_\tau(n,z)$ analogous to the expansion  \eqref{spherical-exp} (this is the spherical expansion of $\phi$ centered at $\fz$). The corresponding test vector is given by $u^\fz_w=\pi(h\inv)u_w$, and as a function on $S_\fz$ has the singularity at the point $s_\fz$. The proof that the vector  $u^\fz_w$ is an approximate eigenvector of Hecke operators  given in Section \ref{approx-vect-Sect} now proceeds as before once we notice that one can choose representatives $\g_i$ for a Hecke operator in the {\it rational} Borel subgroup $B_\fz$. The rest of the proof of Theorem \ref{E-value-continuation} is identical to the case we considered.

In fact, we can apply the above argument to any compact subgroup $K$, i.e., an expansion at any point $z\in\CH$. For this we have to normalize the corresponding special functions  $F^z_{\tau}(n,g)$. This should be made with the help of the test vector $u_w$ corresponding to a rational Borel subgroup, as in Section~\ref{F-normalize}. With such a normalization of special functions and as a result the automorphic coefficients $a_n^z$, the resulting Dirichlet series could be meromorphically continued as before.

\subsection{Real quadratic periods}\label{Real} Only a slight modification is  needed in order to treat real quadratic fields, and, as is well-known, adelically one can treat $\Cm$ and real quadratic fields simultaneously.

Let $\ell\subset \CH$ be a geodesic corresponding to a real quadratic field $E$ (the equivalence class of such geodesics corresponds to an appropriate class group of $E$). There exists a non-trivial  element  $\g_\ell\in \PGL_2(\bq)$ fixing $\ell$, which is conjugate  to a diagonal matrix $h\g_\ell h\inv=diag(u,u\inv)$,  where $u$ is a unit in $E$. We will assume that $\g_\ell$ generates the corresponding group of units (i.e., $u$ is a fundamental unit), and choose two independent eigenvectors $v_1$ and $v_2$ of $\g_\ell$.  Consider a character of the diagonal group $\chi_{\al}(diag(a,b))=|a/b|^{-\al}$. To any such a character, we associate the equivariant functional $d_\al:V_{\pi_\tau}\to\bc$  given in the plane model by the kernel
$d_\al(x,y)=|x|^{\al+\tau/2-1/2}|y|^{-\al+\tau/2-1/2}$ (i.e., $\pi_{-\tau}(diag(a,b))d_\al=\chi_\al(diag(a,b))d_\al$).
The functional $d^\ell_\al=\pi_{-\tau}(h)d_\al$ is $\g_\ell$-equivariant (i.e., satisfies $\pi_{-\tau}(\g_\ell)d_\al=\chi_\al(u)d_\al$). We consider characters $\chi_i=\chi_{\al_i}$, $i\in\bz$, which are trivial on the unit group, i.e., $|u|^{\al_i}=1$ (this implies that $\al_i=i\al_1$).  We define then special functions for the hyperbolic expansion \eqref{hyp-exp} by $G_\tau(i,g)=\langle\pi_\tau(g)e_0,d_{\chi_{i}}\rangle$.

Let $B(\bq)$ be a rational Borel subgroup, and $\xi\in \br^2\setminus 0$ be an eigenvector of $B(\bq)$. Consider an {\it affine} line $L\subset \br^2\setminus 0$ generated by the eigenvector $v_1$ of $\g_\ell$ (it is transversal to $\xi$). We denote by $0_L$ the point of intersection of $L$ with the line $\br v_2$, and introduce the linear parameter $t$ on $L$ such that $t=0$ corresponds to $0_L$. Let $H_{\pi_\tau}$ be the plane realization of the principal series representation $V_{\pi_\tau}$  (i.e., $H_{\pi_\tau}$ is the space of homogeneous functions of the homogeneous degree $\tau-1$). We restrict functions in $H_{\pi_\tau}$ to the affine line $L$, and obtain the standard (twisted) linear fractional action of $G$ on the line model for $\pi_\tau$. All elements in $B(\bq)$ are fixing the point $b_L=L\cap \br\xi$, which we assume corresponds to $t=1$. Hence we can repeat our  construction by choosing Hecke operators with representatives in $B(\bq)$, and construct the test vector $u_w(t)=|t-1|^{w-1}f(t-1)$ through the coordinate $t$ as in Section \ref{test-vectors-K}. The computation of the spectral expansion of $u_w$ with respect to $d^\ell_\al$ is straightforward since on $L$ the functional $d^\ell_\al$ coincides with the Mellin transform in $t$ (and hence,  $\langle u_w,d^\ell_\al \rangle$ is given by the Beta function).

\begin{acknowledgements} It is a pleasure to thank Joseph Bernstein for endless discussions concerning automorphic functions, Dorian Goldfeld who suggested to study the series $D_E(s,w)$, Roelof Bruggeman for enlightening comments, and the referee for pointing out a mistake in an earlier draft of the paper and suggesting another proof for the Eisenstein series.
\end{acknowledgements}

\end{document}